\theoremstyle{plain}
 \newtheorem{lem}{Lemma}[section]
 \newtheorem{cor}{Corollary}[section]
\theoremstyle{definition}
 \newtheorem{exm}{Example}[section]
\theoremstyle{remark}
 \newtheorem{rem}{Remark}[section]
 \numberwithin{equation}{section}
\renewcommand{\geq}{\geqslant}
\title[A study of generalized Dirichlet Integrals.]{A short study of generalized Dirichlet Integrals\\ via tempered distributions.}
\subjclass[2020]{46F10
}
\keywords{Dirichlet Integral; Tempered distributions; Fourier transforms}
\author[Belardinelli]{Cyril Belardinelli} 
\address{ 
Kantonsschule Solothurn\\
Solothurn\\
Switzerland}
\email{cyril.belardinelli@ksso.ch}
\begin{document}
\vspace{18mm} \setcounter{page}{1} \thispagestyle{empty}
\begin{abstract}
In the present article, the author uses Fourier theory of tempered distributions (generalized functions) in deriving a formula for Dirichlet-like integrals. The applied method is remarkably efficient and allows a solution in a few calculational steps.
The interest of the present article lies not only in the derivation of an appearently unknown formula but also in the original methodology of its derivation. 
\end{abstract}
\maketitle
\section{\label{histo}Introduction}
In the present article we are concerned with integrals of the following type:\\
\begin{equation}
\label{formula_0}
I(m,n):=\int_{0}^{\infty}\frac{\sin^{n}{x}}{x^m}\mathrm{d}x  
\end{equation}
where $n$, $m$ are positive integers.\\    
The improper integral $I(m,n)$ is absolut convergent when $n\geq m\geq 2$.\\
The case $m=1$ has to be treated separately since the integrals $I(1,n)$ are not absolut convergent. It turns out that 
these integrals are divergent for even $n$ and convergent for odd $n$.
A corresponding formula for the case of equal exponents $n=m$ is known \cite{apostol:1980,remmert:2001}. The simplest case $n=m=1$ is the well-known Dirichlet Integral:\\
\begin{equation}
\label{formula1}
I(1,1)=\int_{0}^{\infty}\frac{\sin{x}}{x}\mathrm{d}x=\frac{1}{2}\pi
\end{equation}
\\ 
\newpage
The main purpose of this article is to derive the following formulae.\\
\\
In the case $n\geq m\geq 2$:
\begin{equation}
\label{formula_-1}
I(m,n)=\frac{i^{m-n+1}}{2^{n}(m-1)!}\sideset{}{'}\sum^{n}_{l=0}(-1)^{l}\binom{n}{l}(n-2l)^{m-1}\ln[{i(2l-n)}]
\end{equation}
In the case $m=1$:
\begin{equation}
\label{formel_Zwei}
I(1,n)=\begin{cases}
\text{divergent}&\quad n\,\text{even}\\
\frac{-1}{(2i)^n}\sum^{n}_{l=0}(-1)^{l}\binom{n}{l}\ln[{i(2l-n)}]&\quad n\,\text{odd}\\
\end{cases}
\end{equation}
where in formula \ref{formula_-1} the term $2l=n$ is omitted in the summation. Note that the complex logarithm in the above formulae may be written as follows:
\begin{equation*}
\ln[{i(2l-n)}]=\ln{|2l-n|}+i\frac{\pi}{2}\,\text{sign}(2l-n)
\end{equation*}
\begin{exm}
\begin{equation*}
\begin{split}
I(1,1)&=\frac{1}{2}\pi \quad\quad I(1,2)=\text{divergent}\qquad\qquad\qquad\qquad\,\, I(1,3)=\frac{1}{4}\pi\\
I(2,2)&=\frac{1}{2}\pi \quad\quad I(2,3)=\frac{3}{4}\ln{3}\qquad\qquad\qquad\qquad\qquad I(2,4)=\frac{1}{4}\pi \\
I(3,3)&=\frac{3}{8}\pi \quad\quad I(3,4)=\ln{2}\qquad\qquad\qquad\qquad\qquad\,\,\  I(3,5)=\frac{5}{32}\pi \\
I(4,4)&=\frac{1}{3}\pi \quad\quad I(4,5)=\frac{125}{96}\ln{5}-\frac{45}{32}\ln{3}\qquad\qquad\,\,\,  I(4,6)=\frac{1}{8}\pi \\
I(5,5)&=\frac{115}{384}\pi\quad I(5,6)=\frac{27}{16}\ln{3}-2\ln{2}\qquad\qquad\quad\,\,\, \, I(5,7)=\frac{77}{768}\pi \\
\\
\end{split}
\end{equation*}
\end{exm}
\subsection{Dirichlet Integral}
In the present section we introduce the method by calculating the well-known Dirichlet Integral. We rewrite this integral in the following suggestive form:
\begin{equation*}
\int_{0}^{\infty}\frac{\sin{kx}}{x}\mathrm{d}x=\frac{1}{2i}\int_{-\infty}^{\infty}\frac{\Theta(x)\,\mathrm{e}^{-i(-k)x}}{x}-\frac{\Theta(x)\,\mathrm{e}^{-ikx}}{x}\mathrm{d}x
\end{equation*}
Where $\Theta(\cdot)$ denotes Heaviside's unit step function defined by:
\begin{equation*}
\Theta(x)=\begin{cases}
1&\text{if} \quad x>0\\
\frac{1}{2}&\text{if} \quad x=0\\
0&\text{if} \quad x<0
\end{cases}
\end{equation*}
Obviously, splitting of the latter integral is not possible. However, if one reinterprets 
the quantity $\frac{\Theta(x)}{x}$ as a tempered distribution rather than a function (See Appendix A) one may
write:
\begin{equation}
\label{zusatz}
  \int_{0}^{\infty}\frac{\sin{kx}}{x}\mathrm{d}x=\frac{1}{2i}\mathscr{F}\left[\frac{\Theta(x)}{x}\right]_{-k}-\frac{1}{2i}\mathscr{F}\left[\frac{\Theta(x)}{x}\right]_{k}
\end{equation}
Note that the Fourier transform $\mathscr{F}\left[\frac{\Theta(x)}{x}\right]$ is well-defined in the context of tempered distributions. The explicit formula reads as follows (See Appendix \ref{appendix_1}):
\begin{equation}
\label{formula_10}
\mathscr{F}\left[\frac{\Theta(x)}{x}\right]=-\gamma-\ln{|k|}-i\frac{\pi}{2}\text{sign}(k)=-\gamma-\ln{ik}
\end{equation}
where $\gamma=0.5772156\dots$ denotes Euler's constant.
By inserting formula \ref{formula_10} into Eq.~\ref{zusatz} we obtain the correct result:
\begin{equation*}
\label{formula2}
\int_{0}^{\infty}\frac{\sin{kx}}{x}\mathrm{d}x=\frac{\pi}{2}\,\text{sign}(k)
\end{equation*}
\section{\label{Sec.II}Derivation of the general formula}
In the present section we derive a general formula for the integrals \ref{formula_0}. For this purpose we need the following Lemma.
\begin{lem}
\label{lemma1}
\begin{equation*}
\left(\frac{\Theta}{x}\right)^{(n)}=-\delta^{(n)}(x)\left[1+\frac{1}{2}+\frac{1}{3}+\dots+\frac{1}{n}\right]
+(-1)^n\,n!\,\frac{\Theta(x)}{x^{n+1}}
\end{equation*}
where $\delta^{(n)}(x)$ denotes the $nth$ derivative of Delta-Dirac-distribution.
\end{lem}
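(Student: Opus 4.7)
The plan is to proceed by induction on $n$, with the inductive step reduced to a single sub-lemma concerning the distributional derivative of $\Theta/x^{n+1}$.

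For the base case $n=0$, both sides collapse to $\Theta(x)/x$, since the bracketed harmonic sum is empty and $0!=1$. For the inductive step I would assume the formula holds at level $n$ and differentiate both sides once in the distribution sense, obtaining
\begin{equation*}
\left(\frac{\Theta}{x}\right)^{(n+1)} = -\delta^{(n+1)}(x)\,H_n + (-1)^n\, n!\,\frac{d}{dx}\!\left[\frac{\Theta(x)}{x^{n+1}}\right],
\end{equation*}
where $H_n := 1+\tfrac12+\cdots+\tfrac1n$. The inductive step therefore reduces to establishing the auxiliary identity
\begin{equation*}
\frac{d}{dx}\!\left[\frac{\Theta(x)}{x^{n+1}}\right] = -(n+1)\,\frac{\Theta(x)}{x^{n+2}} + \frac{(-1)^{n+1}}{(n+1)!}\,\delta^{(n+1)}(x).
\end{equation*}
Substituting this identity back and using $H_n + \tfrac{1}{n+1} = H_{n+1}$, the $\delta^{(n+1)}$ contributions combine with coefficient exactly $-H_{n+1}$ and the regular part becomes $(-1)^{n+1}(n+1)!\,\Theta(x)/x^{n+2}$, which closes the induction.

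To prove the auxiliary identity, I would pair both sides with a Schwartz test function $\varphi$ and use the finite-part definition of $\Theta/x^{n+1}$ recalled in Appendix~\ref{appendix_1}. Starting from $\langle(\Theta/x^{n+1})',\varphi\rangle = -\langle\Theta/x^{n+1},\varphi'\rangle$ and expressing the right-hand side as the $\epsilon\to 0$ limit of the truncated integral $\int_\epsilon^\infty \varphi'(x)/x^{n+1}\,dx$ minus the regularizing counter-terms, an integration by parts produces a boundary term $-\varphi(\epsilon)/\epsilon^{n+1}$ together with an interior integral $(n+1)\int_\epsilon^\infty \varphi(x)/x^{n+2}\,dx$. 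Taylor-expanding $\varphi(\epsilon)$ to order $n+1$, the $\epsilon^{-k}$ poles and the $\ln\epsilon$ contribution produced this way cancel exactly against the counter-terms that regularize $\Theta/x^{n+1}$ (acting on $\varphi'$) and $\Theta/x^{n+2}$ (acting on $\varphi$). Only the finite Taylor coefficient $\varphi^{(n+1)}(0)/(n+1)!$ survives, and via $\langle\delta^{(n+1)},\varphi\rangle = (-1)^{n+1}\varphi^{(n+1)}(0)$ it reconstructs precisely the claimed delta coefficient $(-1)^{n+1}/(n+1)!$.

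The main obstacle I anticipate is exactly the bookkeeping of these singular counter-terms: after the index shift $k\mapsto k-1$ induced by the integration by parts, the $\epsilon^{-(n+1-k)}$ powers and the logarithmic term must line up identically on both sides. This is pure algebra, but the coefficients are fiddly, and this is the only step where a combinatorial slip could creep in. Once those cancellations are verified, the sub-lemma and hence the lemma follow by induction.
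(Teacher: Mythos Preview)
Your proof is correct and follows the same inductive skeleton as the paper: both differentiate the level-$n$ formula once and reduce the step to the auxiliary identity
\[
\frac{d}{dx}\!\left[\frac{\Theta(x)}{x^{n+1}}\right] = -(n+1)\,\frac{\Theta(x)}{x^{n+2}} + \frac{(-1)^{n+1}}{(n+1)!}\,\delta^{(n+1)}(x).
\]
The only real difference is in how this sub-lemma is justified. The paper dispatches it in one line via the formal Leibniz rule $(\Theta/x^{n+1})' = \delta(x)/x^{n+1} - (n+1)\,\Theta/x^{n+2}$ together with the stock identity $\delta^{(m)}(x) = (-1)^m m!\,\delta(x)/x^m$, treating both as ``well-known''. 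You instead propose to verify it from scratch through the finite-part regularization, integrating by parts against a test function and tracking the cancellation of the $\epsilon^{-k}$ and $\ln\epsilon$ counter-terms. Your route is longer and, as you say, combinatorially fiddly, but it makes explicit exactly the content that the paper's quoted identities encapsulate; the paper's route is much quicker but presupposes those distributional facts.
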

\begin{proof}
We give a proof by induction on n.
Evidently, the Lemma holds for $n=1$:
\begin{equation*}
\left(\frac{\Theta}{x}\right)^{\prime}
=\delta(x)\cdot\frac{1}{x}
-\frac{\Theta(x)}{x^2}=-\delta^{\prime}(x)-\frac{\Theta(x)}{x^2}
\end{equation*}
Where the following well-known identities have been used.
\begin{equation*}
\begin{split}
&\Theta^{\prime}(x)=\delta(x)\\
&\delta^{\prime}(x)=-\frac{\delta(x)}{x}
\end{split}
\end{equation*}
Inductive step $n\Rightarrow n+1$:
\begin{equation*}
\begin{split}
\left(\frac{\Theta}{x}\right)^{(n+1)}&=-\delta^{(n+1)}(x)\left[1+\frac{1}{2}+\frac{1}{3}+\dots+\frac{1}{n}\right]
+(-1)^n\,n!\,\left[\frac{\delta(x)}{x^{n+1}}-(n+1)\frac{\Theta(x)}{x^{n+2}}\right]\\
&=-\delta^{(n+1)}(x)\left[1+\frac{1}{2}+\frac{1}{3}+\dots+\frac{1}{n+1}\right]+
(-1)^{n+1}\,(n+1)!\,\,\frac{\Theta(x)}{x^{n+2}}
\end{split}
\end{equation*}
Where the following equally well-known formula for the $nth$ derivative of delta function has been used:
\begin{equation*}
\delta^{(n)}(x)=(-1)^n\,n!\,\frac{\delta(x)}{x^n}
\end{equation*}
\end{proof}
Lemma \ref{lemma1} leads to the following Corollary:
\begin{cor}
\label{corollary_1}
\begin{equation}
\label{formula_3}
\mathscr{F}\left[\frac{\Theta(x)}{x^{n+1}}\right]=\frac{(-ix)^n}{n!}\,\left(\mathscr{F}\left[\frac{\Theta(x)}{x}\right]+\sum^{n}_{l=1}\frac{1}{l}\right)
\end{equation}
\end{cor}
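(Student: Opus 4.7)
The corollary is almost a direct algebraic consequence of Lemma \ref{lemma1}: the plan is simply to apply the Fourier transform to both sides of the identity established in the lemma and then solve for the unknown quantity $\mathscr{F}[\Theta(x)/x^{n+1}]$.

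Concretely, first I would invoke Lemma \ref{lemma1} to write
\begin{equation*}
(-1)^n\,n!\,\frac{\Theta(x)}{x^{n+1}} = \left(\frac{\Theta(x)}{x}\right)^{(n)} + \delta^{(n)}(x)\sum_{l=1}^{n}\frac{1}{l},
\end{equation*}
which is a valid identity in the space of tempered distributions. Next, I would apply $\mathscr{F}$ to both sides and use the two standard distributional identities
\begin{equation*}
\mathscr{F}\bigl[f^{(n)}\bigr] = (ik)^n\,\mathscr{F}[f],\qquad \mathscr{F}\bigl[\delta^{(n)}\bigr] = (ik)^n,
\end{equation*}
valid under the convention fixed in Appendix \ref{appendix_1}. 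This immediately transforms the right-hand side into $(ik)^n\bigl(\mathscr{F}[\Theta(x)/x] + \sum_{l=1}^n 1/l\bigr)$.

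Dividing by $(-1)^n\,n!$ and using the elementary observation that $(ik)^n/(-1)^n = (-ik)^n$ yields exactly formula \eqref{formula_3}, with $k$ playing the role of the variable denoted $x$ in the statement (a mild abuse of notation common in this context). No convergence issues arise since every manipulation takes place in $\mathscr{S}'(\mathbb{R})$, where differentiation and Fourier transformation are continuous operations.

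The derivation is essentially mechanical, so there is no real obstacle; the only point demanding care is bookkeeping of the signs of $i^n$ and $(-1)^n$ when rearranging the identity, and ensuring that the convention for the Fourier transform used here is consistent with the one adopted in Appendix \ref{appendix_1} and in formula \eqref{formula_10}.
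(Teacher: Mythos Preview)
Your proposal is correct and follows essentially the same route as the paper: apply the Fourier transform to the identity of Lemma~\ref{lemma1}, use the standard rules $\mathscr{F}[f^{(n)}]=(ik)^n\mathscr{F}[f]$ and $\mathscr{F}[\delta^{(n)}]=(ik)^n$, and solve algebraically for $\mathscr{F}[\Theta(x)/x^{n+1}]$. The only cosmetic difference is that the paper computes $\mathscr{F}\bigl[(\Theta/x)^{(n)}\bigr]$ in two ways and equates them, whereas you first rearrange Lemma~\ref{lemma1} and then transform---the underlying argument is identical.
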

\begin{proof}
On one hand, $\Theta(x)/x$ fulfills the following identity:
\begin{equation}
\label{gleichung1}
\mathscr{F}\left[\left(\frac{\Theta(x)}{x}\right)^{(n)}\right]=(ix)^{n}\mathscr{F}\left[\frac{\Theta(x)}{x}\right]
\end{equation}
On the other hand, according to Lemma \ref{lemma1}, we have:
\begin{equation}
\label{gleichung2}
\mathscr{F}\left[\left(\frac{\Theta(x)}{x}\right)^{(n)}\right]=-(ix)^{n}\sum^{n}_{l=1}\frac{1}{l}+(-1)^n\,n!\,\mathscr{F}\left[\frac{\Theta(x)}{x^{n+1}}\right]
\end{equation}
We obtain formula of Corollary \ref{formula_3} by equating Eqs.~\ref{gleichung1}, \ref{gleichung2}.
\end{proof}
It is now a straightforward task to derive a formula for the integrals \ref{formula_0}. By virtue of the Eulerian formula we may use:
\begin{equation*}
\sin^{n}{x}=\left(\frac{\mathrm{e}^{ix}-\mathrm{e}^{ix}}{2i}\right)^n
\end{equation*}
Expanding the latter expression yields:
\begin{equation}
\label{expansion}
\sin^{n}{x}=\frac{1}{(2i)^n}\sum^{n}_{l=0}\binom{n}{l}(-1)^l \mathrm{e}^{-ix(2l-n)}
\end{equation}
By inserting expansion \ref{expansion} into \ref{formula_0} and by adding an exponential we obtain formally:
\begin{equation}
\label{formula_6}
\int_{0}^{\infty}\frac{\sin^{n}{x}}{x^m}\mathrm{e}^{-\epsilon x}\mathrm{d}x=
\frac{1}{(2i)^n}\sum^{n}_{l=0}\binom{n}{l}(-1)^l\mathscr{F}\left[\frac{\Theta(x)}{x^m}\right]_{k=2l-n-i\epsilon}
\end{equation}
We now take advantage of Corollary \ref{formula_3} and formula \ref{formula_10} which leads to:
\\
\begin{equation}
\label{formula_20}
\begin{split}
\int_{0}^{\infty}\frac{\sin^{n}{x}}{x^m}\mathrm{e}^{-\epsilon x}\mathrm{d}x&=\frac{i^{m-n+1}}{2^{n}(m-1)!}\sum^{n}_{l=0}(-1)^{l}\binom{n}{l}(n-2l+i\epsilon)^{m-1}\ln[{i(2l-n-i\epsilon)}]\\
&+(C-\gamma)\frac{i^{m-1-n}}{2^n\,(m-1)!}\sum^{n}_{l=0}(-1)^l\,\binom{n}{l}\,(n-2l+i\epsilon)^{m-1}
\end{split}
\end{equation}
\\
Where $C:=1+\frac{1}{2}+\frac{1}{3}+\dots+\frac{1}{m-1}$
\\
\\
In the next step we take the limit $\epsilon\to 0$ on both sides of Eq.~\ref{formula_20}.\\
\\
In the case $n\geq m>1$ we obtain the formula:
\begin{equation*}
\int_{0}^{\infty} \frac{\sin^{n}x}{x^m}\mathrm{d}x=\frac{i^{m-n+1}}{2^{n}(m-1)!} \sideset{}{'}\sum^{n}_{l=0}(-1)^{l}\binom{n}{l}(n-2l)^{m-1}\ln[{i(2l-n)}]
\end{equation*}
Where the following identity has been used (See Appendix B)
\begin{equation*}
\sum^{n}_{l=0}(-1)^l\,\binom{n}{l}\,(n-2l)^{m-1}=0,\quad n\geq m\geq 2
\end{equation*}
In the case $m=1$ we get:
\\
\begin{equation*}
\int_{0}^{\infty}\frac{\sin^{n}x}{x}\mathrm{d}x=\begin{cases}
\text{divergent}&\quad n\,\text{even}\\
-\frac{1}{(2i)^n}\sum^{n}_{l=0}(-1)^{l}\binom{n}{l}\ln[{i(2l-n)}]&\quad n\,\text{odd}\\
\end{cases}
\end{equation*}
\\
where the following elementary identity has been used:
\begin{equation*}
\sum^{n}_{l=0}(-1)^l\,\binom{n}{l}=(1-1)^n=0
\end{equation*}
\begin{rem}
By using the identity
\begin{equation*}
\ln[{i(2l-n)}]=\ln{|2l-n|}+i\frac{\pi}{2}\,\text{sign}(2l-n)
\end{equation*}
one derives easily the following formula for $n\geq m>1$:
\\
\begin{equation*}
\int_{0}^{\infty}\frac{\sin^{n}x}{x^m}\mathrm{d}x = \left\{
\begin{array}{lll}
\frac{i^{m-n+2}}{2^{n}(m-1)!}\frac{\pi}{2}\sideset{}{'}\sum^{n}_{l=0}(-1)^{l}\binom{n}{l}(n-2l)^{m-1}\,\text{sign}(2l-n) &\textrm{if $n-m$ even} \\
& \\
\frac{i^{m-n+1}}{2^{n}(m-1)!}\sideset{}{'}\sum^{n}_{l=0}(-1)^{l}\binom{n}{l}(n-2l)^{m-1}\ln{|2l-n|} & \, \textrm{if $n-m$ odd} \\
\end{array}
\right. 
\end{equation*}
\\
A formula without case distinction is possible by using the following definitions:
\begin{equation*}
\begin{split}
0^{m-1}\ln{(i\cdot0)}&:=0\quad \quad\ m=2, 3,\dots\\
0^{m-1}\ln{(i\cdot0)}&:=+\infty\quad    m=1\\
\end{split}
\end{equation*}
Presuming this we may write down a formula valid in the entire range $n\geq m\geq1$:
\begin{equation*}
I(m,n)=\frac{i^{m-n+1}}{2^{n}(m-1)!}\sum^{n}_{l=0}(-1)^{l}\binom{n}{l}(n-2l)^{m-1}\ln[{i(2l-n)}]
\end{equation*}
Evidently, these definitions get their justification due to the following limits:
\begin{equation*}
\lim_{x\to 0} x^{m-1}\ln{ix}=\begin{cases}
0&\text{if} \quad m=2, 3, \dots\\
\infty&\text{if} \quad m=1\\
\end{cases}
\end{equation*}
\end{rem}
\appendix
\section{}
\label{appendix_1}
We rewrite the function $\Theta(x)/x$ in the form: 
\begin{equation*}
\frac{\Theta(x)}{x}=\frac{1}{2|x|}+\frac{1}{2x}
\end{equation*}
It is well-known that the functions $1/|x|$ and $1/x$ can be turned into tempered distributions via the following operations \cite{schwartz:1950,schwartz:1951,gelfand:1964}:
\begin{equation*}
\begin{split}
\mathrm{PV}\frac{1}{x}&:=\lim_{\epsilon\to0^{+}}\int\limits_{|x|>\epsilon}\frac{\varphi(x)}{x}\mathrm{d}x\\
\mathrm{Pf}\frac{1}{|x|}&:=\mathrm{f.\,p.\,of}\lim_{\epsilon\to0^{+}}\int\limits_{|x|>\epsilon}\frac{\varphi(x)}{|x|}\mathrm{d}x
\end{split}
\end{equation*}
Where \emph{PV} refers to Cauchy's Principal Value and \emph{Pf} to Hadamard's finite part. 
The function $\varphi$ denotes a test function in the Schwartz space $\mathscr{S}(\mathbb{R})$ of smooth and rapidly decaying functions.\\
As a tempered distribution $\Theta(x)/x$ has a well-defined Fourier transform. In order to derive it we take advantage of the following formula \cite{gelfand:1964}:
\begin{equation}
\label{formel}
\mathscr{F}\left[\frac{1}{|x|}\right]=-2\gamma-2\ln{|x|}
\end{equation}
By inserting the identity 
\begin{equation*}
\frac{1}{|x|}=\frac{2\,\Theta(x)-1}{x}
\end{equation*}
into formula \eqref{formel}
we obtain
\begin{equation*}
\mathscr{F}\left[\frac{\Theta(x)}{x}\right]=-\gamma-\ln{|x|}-i\frac{\pi}{2}\text{sign}(x)=-\gamma-\ln{ix}
\end{equation*}
\section{}
\begin{equation*}
\sum^{n}_{l=0}(-1)^l\,\binom{n}{l}\,(n-2l)^{m-1}=0,\quad n\geq m\geq 2
\end{equation*}
\begin{proof}
It is enough to prove the following identity:
\begin{equation*}
\sum^{n}_{l=0}(-1)^l\,\binom{n}{l}\,l^{m-1}=0,\quad n\geq m\geq 2
\end{equation*}
We proceed by induction on $m$. 
As shown in the following, the identity holds for $m=2$ and arbitrary $n\geq m$. By using the binomial theorem we obtain:
\begin{equation*}
\diff{}{x}(1-x)^n=-n\,(1-x)^{n-1}=\sum^{n}_{l=0}(-1)^l\,\binom{n}{l}\,l\,x^{l-1},\quad x\neq 0
\end{equation*}
Inserting the value $x=1$ leads to the statement for $m=2$.\\
\\
Inductive step $m\Rightarrow m+1$:
\begin{equation}
\label{app_B_1}
\diff[m+1]{}{x}(1-x)^n=(-1)^{m+1}\,n\,(n-1)\cdots (n-m)\,(1-x)^{n-m-1} 
\end{equation}
On the other hand:
\begin{equation}
\label{app_B_2}
\diff[m+1]{}{x}(1-x)^n=\sum^{n}_{l=0}(-1)^l\,\binom{n}{l}\,\underbrace{l\,(l-1)\cdots(l-m)}_{=l^{m+1}+P_{m}(l)}\,x^{l-m-1},\quad x\neq 0
\end{equation}
Where $P_{m}(l)$ denotes a polynom in $l$ of order $m$. Once again, equating Eq.~\ref{app_B_1} and Eq.~\ref{app_B_2} and inserting the value $x=1$ leads to the statement.
\end{proof}
\appendix
\bibliography{References_paper_dirichlet_integral_general}
\bibliographystyle{plain}
\end{document}